\newtheorem{theorem}{Theorem}[section]
\newtheorem{prop}[theorem]{Proposition}
\newtheorem{lemma}[theorem]{Lemma}
\newtheorem{cor}[theorem]{Corollary}
\def\<{\langle}
\def\>{\rangle}
\newcommand{\dd}{\mathsf{d}}
\newcommand{\PGL}{\mathsf{PGL}}
\def\qed{{\hfill\hphantom{.}\nobreak\hfill$\Box$}}
\newcommand{\cO}{\mathcal{O}}
\newcommand{\N}{\mathbb{N}}
\title{Quotients of trees for arithmetic subgroups of $\mathsf{PGL}_2$ over a rational function field}
\author{Ralf K\"ohl \and Bernhard M\"uhlherr \and Koen Struyve}
\begin{document}
\maketitle

\section{Introduction}
Let $k$ be the finite field $\mathbb{F}_q$ of order $q$  and $F := k(t)$ the rational function field over $k$. Let $p$ be a place of degree $d$ of $F$ corresponding to an irreducible monic polynomial $f$, inducing the valuation $\nu_p$. Let $\mathcal{O}_{\{p\}}$ be the subring of $F$ consisting of the elements of $F$ having poles only at $p$.
Let $X$ be the Bruhat--Tits tree corresponding to the valuation $\nu_p$. The vertices of this tree correspond to the homothety classes of rank two $\cO_p$-sublattices in $F^2$. 

Serre \cite[Chapter~II, Section~2.4.2]{Ser:80} computed the fundamental domain $\Gamma\backslash X$, where $\Gamma$ is the arithmetic group $\mathsf{PGL}_2(\mathcal{O}_{\{p\}})$ for degrees $d \in \{ 1, 2, 3, 4 \}$:
\paragraph{$d =1$.}

\begin{center}
\begin{tikzpicture}
\draw (1,0) -- (5.4,0);
\draw[dotted] (5.4,0) -- (5.8,0);
\draw \foreach \x in {1,2,...,5} {
 (\x,0) node[circle, draw, fill=black!50,
                        inner sep=0pt, minimum width=4pt] {} 
};

\end{tikzpicture}
\end{center}

\paragraph{$d =2$.}

\begin{center}
\begin{tikzpicture}
\draw (-.4,0) -- (5.4,0);
\draw[dotted] (5.4,0) -- (5.8,0) (-.4,0) -- (-.8,0);
\draw \foreach \x in {0,1,...,5} {
 (\x,0) node[circle, draw, fill=black!50,
                        inner sep=0pt, minimum width=4pt] {} 
};

\end{tikzpicture}
\end{center}

\paragraph{$d =3$.}

\begin{center}
\begin{tikzpicture}
\draw (-.4,0) -- (5.4,0) (3,0) -- (3,1) -- (5.4,1);
\draw[dotted] (5.4,0) -- (5.8,0) (5.4,1) -- (5.8,1) (-.4,0) -- (-.8,0);

\draw \foreach \x in {0,1,...,5} {
 (\x,0) node[circle, draw, fill=black!50,
                        inner sep=0pt, minimum width=4pt] {} 
};

\draw \foreach \x in {3,4,5} {
 (\x,1) node[circle, draw, fill=black!50,
                        inner sep=0pt, minimum width=4pt] {} 
};

\end{tikzpicture}
\end{center}

\paragraph{$d =4$.}

\begin{center}
\begin{tikzpicture}
\draw (-.4,0) -- (5.4,0) (3,0) -- (3,1) -- (5.4,1) (2,0) -- (2,1) -- (-.4,1);
\draw[dotted] (5.4,0) -- (5.8,0) (5.4,1) -- (5.8,1) (-.4,0) -- (-.8,0);

\path (2,1) edge (3,1)
	(2,1) edge [bend left=15] (3,1)
	(2,1) edge [bend right=15] (3,1);
\draw[<->] (2.5,.7) -- (2.5,1.3) node[above] {$q$};

\draw \foreach \x in {0,1,...,5} {
 (\x,0) node[circle, draw, fill=black!50,
                        inner sep=0pt, minimum width=4pt] {} 
 (\x,1) node[circle, draw, fill=black!50,
                        inner sep=0pt, minimum width=4pt] {}
};

\end{tikzpicture}
\end{center}

In this note we compute the fundamental domain $\Gamma \backslash X$ for arbitrary degree $d$; in Section~\ref{examples} we state the main result and depict the fundamental domains up to degree $7$. The approach of our proof is to study the action of the arithmetric group $\mathsf{PGL}_2(\mathcal{O}_{\{p,\infty\}})$ on the product of the Bruhat--Tits tree $X$ of $\mathsf{PGL}_2(\mathbb{F}_q(t)_{\nu_p})$ and the Bruhat--Tits tree $Y$ of $\mathsf{PGL}_2(\mathbb{F}_q(t)_{\nu_\infty})$. Strong approximation of $\mathsf{PSL}_2$ allows us to identify the $\mathsf{PGL}_2(\mathcal{O}_{\{p\}})$-orbits on $X$ with the $\mathsf{PGL}_2(\mathcal{O}_{\{\infty\}})$-orbits on $Y$ (cf.\ Section~\ref{basics}). A subsequent detailed analysis of double coset spaces in Section~\ref{quotient} yields the desired result.

Our approach makes substantial use of the fact that $\mathcal{O}_{\{\infty\}} \cong k[t]$ is Euclidian. Therefore the potential of generalization of our method is limited; we refer to \cite{MQ:1972} for other situations in which the ring of functions that are regular on a projective curve minus a rational point is Euclidian.

Partial results for the rational case under consideration can be found in~\cite{M:2003}. The non-rational genus $0$ case is studied in \cite{MS:2009} and the elliptic curve case in \cite{T:1993}.

We point out that, by classical results, the fundamental domain together with information concerning the (finite) stabilizers provides a presentation of the group $\mathsf{PGL}_2(\mathcal{O}_{\{p\}})$ by generators and relations, cf.\  \cite[Section~III.$\mathcal{C}$]{BH}, \cite[Chapter~2]{BL}, \cite[Sections~I.4, II.2]{Ser:80}.

{\bf Acknowledgement.} The authors thank Andrei Rapinchuk for a valuable discussion concerning projective curves.

\section{Statement of the Main Result and examples} \label{examples}
In this section we state the main theorem and depict the quotients for $d$ up to $7$.

\noindent {\bf Main Theorem.}
{\em Let $\nu_p$ be a valuation of degree $d$ of the rational function field $\mathbb{F}_q(t)$ and let $X$ be the Bruhat--Tits tree of the locally compact group $\mathsf{PGL}_2(\mathbb{F}_q(t)_{\nu_p})$.
Then the orbit space $\mathsf{PGL}_2(\mathcal{O}_{\{p\}})\backslash X$ can be described as follows.
\begin{enumerate}
\item If $d$ is odd, then its set of vertices is $\{ X_n \mid n \in \mathbb{N}_0 \}$ with  
\begin{itemize}
\item 1 edge between $X_n$ and $X_{n+d}$ \\ ($n \in \mathbb{N}_0$),
\item 1 edge between $X_n$ and $X_{d-n}$ \\ ($n \in \mathbb{N}_0$ and $n, d -n \geq 1$),
\item $q^{2l-1} + q^{2l-2}$ edges between $X_n$ and $X_{d-n-2l}$ \\ ($n,l \in \mathbb{N}_0$ and $n,l, d-n-2l \geq 1$),
\item $q^{2l-2}$ edges between $X_0$ and $X_{d-2l}$  \\ ($l \in \mathbb{N}_0$ and $l, d-2l \geq 1$).
\end{itemize} 
\item If $d$ is even, then its set of vertices is $\{ X_{n}, X_{n}' \mid n \in 2\mathbb{N}_0 \}$ with
\begin{itemize}
\item 1 edge between $X_n$ and $X'_{n+d}$ and between $X'_n$ and $X_{n+d}$ \\ ($n \in 2\mathbb{N}_0$),
\item 1 edge between $X_n$ and $X'_{d-n}$ \\ ($n \in 2\mathbb{N}_0$ and $n, d -n \geq 2$),
\item $q^{2l-1} + q^{2l-2}$ edges between $X_n$ and $X'_{d-n-2l}$ and between $X'_n$ and $X_{d-n-2l}$ \\ ($n \in 2\mathbb{N}_0$, $l \in \mathbb{N}_0$, $l \geq 1$ and $n d-n-2l \geq 2$),
\item $q^{2l-2}$ edges between $X_0$ and $X'_{d-2l}$ and between $X'_0$ and $X_{d-2l}$ \\ ($l \in \mathbb{N}_0$, $l \geq 1$ and $d-2l \geq 2$),
\item $q (q^{d-3}+1)/(q+1)$ edges between $X_0$ and $X'_0$, if $d > 2$, \\ 1 edge between $X_0$ and $X'_0$, if $d =2$.
\end{itemize} 
\end{enumerate}
}

\paragraph{Examples of quotients.}

\paragraph{$d =1$.}

\begin{center}
\begin{tikzpicture}
\draw (0,0) -- (4.4,0);
\draw[dotted] (4.4,0) -- (4.8,0);
\draw \foreach \x in {0,1,...,4} {
 (\x,0) node[circle, draw, fill=black!50,
                        inner sep=0pt, minimum width=4pt] {} node[above] {\footnotesize $X_\x $}
};

\end{tikzpicture}
\end{center}

\paragraph{$d =2$.}

\begin{center}
\begin{tikzpicture}

\draw (-.4,0) -- (5.4,0);
\draw[dotted] (5.4,0) -- (5.8,0) (-.4,0) -- (-.8,0);
\draw \foreach \x in {0,1,...,5} {
 (\x,0) node[circle, draw, fill=black!50,
                        inner sep=0pt, minimum width=4pt] {} 
};
\draw (0,0) node[above] {\footnotesize $X_4 $};
\draw (1,0) node[above] {\footnotesize $X_2' $};
\draw (2,0) node[above] {\footnotesize $X_0 $};
\draw (3,0) node[above] {\footnotesize $X_0' $};
\draw (4,0) node[above] {\footnotesize $X_2 $};
\draw (5,0) node[above] {\footnotesize $X_4' $};

\end{tikzpicture}
\end{center}

\paragraph{$d =3$.}

\begin{center}
\begin{tikzpicture}
\draw (-.4,0) -- (5.4,0) (3,0) -- (3,1) -- (5.4,1);
\draw[dotted] (5.4,0) -- (5.8,0) (5.4,1) -- (5.8,1) (-.4,0) -- (-.8,0);

\draw \foreach \x in {0,1,...,5} {
 (\x,0) node[circle, draw, fill=black!50,
                        inner sep=0pt, minimum width=4pt] {} 
};

\draw \foreach \x in {3,4,5} {
 (\x,1) node[circle, draw, fill=black!50,
                        inner sep=0pt, minimum width=4pt] {} 
};

\draw (0,0) node[below] {\footnotesize $X_8 $};
\draw (1,0) node[below] {\footnotesize $X_5 $};
\draw (2,0) node[below] {\footnotesize $X_2 $};
\draw (3,0) node[below] {\footnotesize $X_1 $};
\draw (4,0) node[below] {\footnotesize $X_4 $};
\draw (5,0) node[below] {\footnotesize $X_7 $};
\draw (3,1) node[above] {\footnotesize $X_0$};
\draw (4,1) node[above] {\footnotesize $X_3 $};
\draw (5,1) node[above] {\footnotesize $X_6 $};
\end{tikzpicture}
\end{center}

\paragraph{$d =4$.}

\begin{center}
\begin{tikzpicture}
\draw (-.4,0) -- (5.4,0) (3,0) -- (3,1) -- (5.4,1) (2,0) -- (2,1) -- (-.4,1);
\draw[dotted] (5.4,0) -- (5.8,0) (5.4,1) -- (5.8,1) (-.4,0) -- (-.8,0);

\path (2,1) edge (3,1)
	(2,1) edge [bend left=15] (3,1)
	(2,1) edge [bend right=15] (3,1);
\draw[<->] (2.5,.7) -- (2.5,1.3) node[above] {$q$};

\draw \foreach \x in {0,1,...,5} {
 (\x,0) node[circle, draw, fill=black!50,
                        inner sep=0pt, minimum width=4pt] {} 
 (\x,1) node[circle, draw, fill=black!50,
                        inner sep=0pt, minimum width=4pt] {}
};

\draw (0,0) node[below] {\footnotesize $X_{10}' $};
\draw (1,0) node[below] {\footnotesize $X_6 $};
\draw (2,0) node[below] {\footnotesize $X_2' $};
\draw (3,0) node[below] {\footnotesize $X_2 $};
\draw (4,0) node[below] {\footnotesize $X_6' $};
\draw (5,0) node[below] {\footnotesize $X_{10}$};

\draw (0,1) node[above] {\footnotesize $X_8 $};
\draw (1,1) node[above] {\footnotesize $X_4' $};
\draw (2,1) node[above] {\footnotesize $X_0 $};
\draw (3,1) node[above] {\footnotesize $X_0' $};
\draw (4,1) node[above] {\footnotesize $X_4 $};
\draw (5,1) node[above] {\footnotesize $X_8' $};

\end{tikzpicture}
\end{center}

\paragraph{$d =5$.}

\begin{center}
\begin{tikzpicture}
\draw (-.4,0) -- (5.4,0)  (5.4,2) -- (3,2) -- (3,1) -- (5.4,1) (2,0) -- (2,1) -- (-.4,1);
\draw[dotted] (5.4,0) -- (5.8,0) (5.4,1) -- (5.8,1) (5.4,2) -- (5.8,2) (-.4,0) -- (-.8,0);

\path (2,1) edge (3,1)
	(2,1) edge [bend left=15] (3,1)
	(2,1) edge [bend right=15] (3,1);
\draw[<->] (2.5,.7) -- (2.5,1.3) node[above] {$q^2$};

\path (3,0) edge (3,1)
	(3,0) edge [bend left=15] (3,1)
	(3,0) edge [bend right=15] (3,1);
\draw[<->] (2.7,.5) -- (3.3,.5) node[right] {$q+1$};

\draw \foreach \x in {0,1,...,5} {
 (\x,0) node[circle, draw, fill=black!50,
                        inner sep=0pt, minimum width=4pt] {} 
 (\x,1) node[circle, draw, fill=black!50,
                        inner sep=0pt, minimum width=4pt] {}
};

\draw \foreach \x in {3,4,5} {
 (\x,2) node[circle, draw, fill=black!50,
                        inner sep=0pt, minimum width=4pt] {} 
};

\draw (0,0) node[below] {\footnotesize $X_{13} $};
\draw (1,0) node[below] {\footnotesize $X_8 $};
\draw (2,0) node[below] {\footnotesize $X_3 $};
\draw (3,0) node[below] {\footnotesize $X_2 $};
\draw (4,0) node[below] {\footnotesize $X_7 $};
\draw (5,0) node[below] {\footnotesize $X_{12}$};

\draw (0,1) node[above] {\footnotesize $X_{10} $};
\draw (1,1) node[above] {\footnotesize $X_5 $};
\draw (2,1) node[above] {\footnotesize $X_0 $};
\draw (3,1) node[above right] {\footnotesize $X_1 $};
\draw (4,1) node[above] {\footnotesize $X_6 $};
\draw (5,1) node[above] {\footnotesize $X_{11} $};
\draw (3,2) node[above] {\footnotesize $X_4 $};
\draw (4,2) node[above] {\footnotesize $X_9 $};
\draw (5,2) node[above] {\footnotesize $X_{14} $};
\end{tikzpicture}
\end{center}

From this point on we depict multi-edges by a label indicating the number of edges in order to avoid cluttering the picture.
\paragraph{$d =6$.} 

\begin{center}
\begin{tikzpicture}
\draw (1.5,0)-- node[right] {$q^2$} (1.5,2) -- node[right] {$q^4$} (1.5,4)  -- (-1.5,4) -- node[left] {$q^2$}(-1.5,2) -- node[left] {$q^4$} (-1.5,0) --  (1.5,0) -- node[sloped, above, near end] {$q+1$} (-1.5,4) (1.5,2) -- node[below]  {$q^3 - q^2 +q$} (-1.5,2) ;
%
%

\draw \foreach \y in {0,2,4} {
	(1.5,\y) -- (1.5+2.4,\y) 
	(-1.5,\y) -- (-1.5-2.4,\y) 
};	
\draw[dotted] \foreach \y in {0,2,4} {	
	(1.5+2.4,\y) -- (1.5+2.8,\y)
	(-1.5-2.4,\y) -- (-1.5-2.8,\y)
};
\draw \foreach \x in {1,2,3} {
	 \foreach \y in {0,2,4} {
 (\x+.5,\y) node[circle, draw, fill=black!50,
                        inner sep=0pt, minimum width=4pt] {} 
 (-\x-.5,\y) node[circle, draw, fill=black!50,
                        inner sep=0pt, minimum width=4pt] {}
}};


\draw (-3.5,0) node[below] {\footnotesize $X_{14}' $};
\draw (-2.5,0) node[below] {\footnotesize $X_8 $};
\draw (-1.5,0) node[below] {\footnotesize $X_2' $};
\draw (1.5,0) node[below] {\footnotesize $X_4 $};
\draw (2.5,0) node[below] {\footnotesize $X_{10}' $};
\draw (3.5,0) node[below] {\footnotesize $X_{16}$};

\draw (-3.5,2) node[above] {\footnotesize $X_{12} $};
\draw (-2.5,2) node[above] {\footnotesize $X_6' $};
\draw (-1.5,2) node[above left] {\footnotesize $X_0 $};
\draw (1.5,2) node[above right] {\footnotesize $X_0' $};
\draw (2.5,2) node[above] {\footnotesize $X_6 $};
\draw (3.5,2) node[above] {\footnotesize $X_{12}'$};

\draw (-3.5,4) node[above] {\footnotesize $X_{16}' $};
\draw (-2.5,4) node[above] {\footnotesize $X_{10} $};
\draw (-1.5,4) node[above] {\footnotesize $X_4' $};
\draw (1.5,4) node[above] {\footnotesize $X_2 $};
\draw (2.5,4) node[above] {\footnotesize $X_{8}' $};
\draw (3.5,4) node[above] {\footnotesize $X_{14}$};

\end{tikzpicture}
\end{center}

\paragraph{$d =7$.}  
\begin{center}
\begin{tikzpicture}
\draw (1.5,0)-- node[right] {$q+1$} (1.5,2) -- (1.5,4)  -- node[above] {$q+1$} (-1.5,4) -- node[left] {$q^4$}(-1.5,2) --  (-1.5,0) --  (1.5,0) -- node[sloped, above, near end] {$q^3+q^2$} (-1.5,4) (1.5,2) -- node[below,pos=0.75]  {$q^2$} (-1.5,2);
\draw (-1.5,4) -- (-1.5,5) -- (-1.5-2.4,5);
\draw[dotted] (-1.5-2.4,5) -- (-1.5-2.8,5);

\draw \foreach \y in {0,2,4} {
	(1.5,\y) -- (1.5+2.4,\y) 
	(-1.5,\y) -- (-1.5-2.4,\y) 
};	
\draw[dotted] \foreach \y in {0,2,4} {	
	(1.5+2.4,\y) -- (1.5+2.8,\y)
	(-1.5-2.4,\y) -- (-1.5-2.8,\y)
};
\draw \foreach \x in {1,2,3} {
	 \foreach \y in {0,2,4} {
 (\x+.5,\y) node[circle, draw, fill=black!50,
                        inner sep=0pt, minimum width=4pt] {} 
 (-\x-.5,\y) node[circle, draw, fill=black!50,
                        inner sep=0pt, minimum width=4pt] {}
}};

\draw \foreach \x in {1,2,3} {
 (-\x-.5,5) node[circle, draw, fill=black!50,
                        inner sep=0pt, minimum width=4pt] {}
};


\draw (-3.5,0) node[below] {\footnotesize $X_{19} $};
\draw (-2.5,0) node[below] {\footnotesize $X_{12} $};
\draw (-1.5,0) node[below] {\footnotesize $X_5 $};
\draw (1.5,0) node[below] {\footnotesize $X_2 $};
\draw (2.5,0) node[below] {\footnotesize $X_9 $};
\draw (3.5,0) node[below] {\footnotesize $X_{16}$};

\draw (-3.5,2) node[above] {\footnotesize $X_{14} $};
\draw (-2.5,2) node[above] {\footnotesize $X_7 $};
\draw (-1.5,2) node[above left] {\footnotesize $X_0 $};
\draw (1.5,2) node[above right] {\footnotesize $X_3 $};
\draw (2.5,2) node[above] {\footnotesize $X_{10} $};
\draw (3.5,2) node[above] {\footnotesize $X_{17}$};

\draw (-3.5,4) node[above] {\footnotesize $X_{15} $};
\draw (-2.5,4) node[above] {\footnotesize $X_{8} $};
\draw (-1.5,4) node[above left] {\footnotesize $X_1 $};
\draw (1.5,4) node[above] {\footnotesize $X_4 $};
\draw (2.5,4) node[above] {\footnotesize $X_{11} $};
\draw (3.5,4) node[above] {\footnotesize $X_{18}$};

\draw (-3.5,5) node[above] {\footnotesize $X_{20} $};
\draw (-2.5,5) node[above] {\footnotesize $X_{13} $};
\draw (-1.5,5) node[above] {\footnotesize $X_6 $};

\end{tikzpicture}
\end{center}

\section{Basics and Preliminaries} \label{basics}
\paragraph{The trees $X$ and $Y$ and their vertices.}
Let $k$ be the finite field $\mathbb{F}_q$ of order $q$ and let $F := k(t)$ the rational function field over $k$. Let $p$ be a place of degree $d$ of $F$ corresponding to an irreducible monic polynomial $f$, inducing the valuation $\nu_p$.

Consider the place $\infty$ of $F$, which is a place of degree 1 and corresponds with a valuation $\nu_\infty(\frac{a}{b}) = \deg b - \deg a$ for $a, b \in k[t]$. The ring $\mathcal{O}_{\{\infty\} }$ of elements with poles only at $\infty$ then equals the ring of polynomials $k[t]$ in $F$.

We represent the vertices of the trees $X$ (the Bruhat--Tits tree of $\mathsf{PGL}_2(F_{\nu_p})$) and $Y$ (the Bruhat--Tits tree of $\mathsf{PGL}_2(F_{\nu_\infty})$) by giving two generators spanning a lattice in the homothety class corresponding to the vertex (cf.\ \cite[Chapter~II, \S1]{Ser:80}). We will write these two generators as the columns of a ($2 \times 2$)-matrix with respect to the standard basis of $F^2$ together with a subscript indicating the place. 

As an example and for future use  we define the vertices
$$
x_0 := \left(\begin{array}{c|c}1 & 0 \\ 0 & 1\end{array}\right)_p, \quad \quad
 y_i := \left(\begin{array}{c|c}1 & t^i \\ 0 & 1\end{array}\right)_\infty (i \in \mathbb{N}_0) . 
$$
The first vertex $x_0$ is a vertex of the tree $X$, the second series $y_i$ ($i \in \mathbb{N}_0$) are vertices in $Y$.

\begin{lemma} \label{distance}
Let $h$ be an element in $\mathsf{PGL}_2(\mathcal{O}_{\{p, \infty\}})$ represented by a matrix $M$ with entries in $k[t]$, not of all of them divisble by $f$ as polynomials. Then the distance between $x_0$ and $h(x_0)$ equals $\nu_p(\det(M))$.
\end{lemma}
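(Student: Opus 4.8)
The plan is to use the standard description of distances in the Bruhat--Tits tree $X$ in terms of elementary divisors of lattices. Recall that if $L_0$ is the standard lattice $\mathcal{O}_p^2$ corresponding to $x_0$ and $L = M\cdot L_0$ is the lattice spanned by the columns of $M$, then by the elementary divisor theorem over the discrete valuation ring $\mathcal{O}_p$ there is a basis of $L_0$ with respect to which $L$ has a basis $(\pi^a e_1, \pi^b e_2)$, where $\pi$ is a uniformizer at $p$ and $a \le b$ are integers; the distance $\dd(x_0, h(x_0))$ in the tree then equals $b - a$ (cf.\ \cite[Chapter~II, \S1]{Ser:80}). So the first step is to record this fact and fix notation.

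Next I would compute $b-a$ from the matrix $M$. The product of the elementary divisors satisfies $a + b = \nu_p(\det M)$ (up to the ambiguity of the scalar in $\PGL_2$, which only shifts $a$ and $b$ by a common constant and hence does not affect $b-a$ or, after normalization, $\nu_p(\det M)$ modulo $2\mathbb{Z}$ --- but here we will pin things down). The key point is the hypothesis that the entries of $M$ are polynomials in $k[t]$ not all divisible by $f$: this says precisely that $M$, viewed over $\mathcal{O}_p$, has at least one entry which is a unit, i.e.\ the smallest elementary divisor is $a = 0$. Indeed, $\nu_p$ of an entry is $0$ iff $f$ does not divide that polynomial, and $a = \min_{i,j}\nu_p(M_{ij})$ because the ideal generated by the entries of $M$ equals $\pi^a \mathcal{O}_p$ (the gcd of the entries is the first elementary divisor). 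Hence $a = 0$, and therefore $b = \nu_p(\det M)$ and $\dd(x_0, h(x_0)) = b - a = \nu_p(\det M)$, as claimed.

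The step I expect to require the most care is the identification $a = \min_{i,j} \nu_p(M_{ij})$ and its interpretation via the hypothesis. One must check that "$M$ has entries in $k[t]$, not all divisible by $f$" is exactly the condition that guarantees one can choose the representing matrix so that the content (gcd of entries) is a $p$-unit, rather than merely that \emph{some} representative has this property --- but since $\PGL_2(\mathcal{O}_{\{p,\infty\}})$-representatives are determined up to a scalar in $\mathcal{O}_{\{p,\infty\}}^\times = k[t]^\times = k^\times$ (here using that the units of $k[t]$ are the nonzero constants), multiplying by such a scalar does not change divisibility of the entries by $f$, so the hypothesis is well-posed and forces $a=0$ for every such representative. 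Once this is in place, the remaining computation $a + b = \nu_p(\det M)$ is immediate from multiplicativity of the valuation applied to the Smith normal form, and the lemma follows. A minor subtlety to mention is that the elementary divisor theorem is applied over the \emph{local} ring $\mathcal{O}_p$ (the valuation ring of $\nu_p$ in $F$), into which $k[t]$ embeds, so the polynomial entries are simply regarded as elements of $\mathcal{O}_p$ for the purpose of this argument.
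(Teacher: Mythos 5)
Your proof is correct and follows essentially the same route as the paper: both identify the smaller elementary divisor as $0$ from the hypothesis that the entries lie in $k[t]$ and are not all divisible by $f$, and then read off the larger one from $\nu_p(\det M)$ (the paper does this by clearing denominators in $M^{-1}$, you via the identity $a+b=\nu_p(\det M)$ from the Smith normal form --- the same computation in different clothing). One small slip in your well-posedness remark: $\mathcal{O}_{\{p,\infty\}}^\times = k^\times\cdot f^{\mathbb{Z}}$, not $k^\times$; this does not affect the argument, since any unit preserving the normalization ``entries in $k[t]$, not all divisible by $f$'' must indeed lie in $k^\times$.
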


\begin{proof}
Let $L$ be a lattice representing $x_0$. Then the definition of distance in~\cite[Chapter~II, Section~1.1]{Ser:80} implies that the distance between $x_0$ and $hx_0$ equals $b-a$ where $b$ is minimal such that $f^b L$ is contained by $M L$ and  $a$  maximal such that $f^a L$ contains $ML$.  

Due to the conditions on the entries of $M$, we have that $L$ contains $ML$ but $f L$ does not, so $a=0$. 

In order to calculate $b$, first note that $f^b L \subseteq ML$ if and only if $ L \supseteq f^{b} M^{-1} L$. The entries of $M^{-1}$ are, up to minus signs and permutations, the entries of $M$ divided by $\det (M) \in \cO_{p,\infty}^\times$. Hence, if one wants to multiply $M^{-1}$ with a power $f^b$ of $f$ such that in this product the entries lie in $k[t]$, then the minimal and sufficient such $b$ is $\nu_p(\det(M))$. \qed
\end{proof}

\medskip
An analogous statement allows one to compute distances in the tree $Y$. For instance, the element of $\mathsf{PGL}_2(\mathcal{O}_{\{p, \infty\}})$ represented by the matrix $\begin{pmatrix} t^{n-m} & 0 \\ 0 & 1 \end{pmatrix}$ maps $y_m$ to $y_n$ and, accordingly, $\dd(y_m,y_n)=\nu_\infty(t^{n-m}) = m-n$.

\paragraph{$\mathsf{PGL}_2(F)$, its subgroups, and their transitivity properties.} The group $\mathsf{PGL}_2(F)$ acts faithfully as group of isometries on both $X$ and $Y$, where the action is induced by the canonical action from the left of $\mathsf{PGL}_2(F)$ on the 2-dimensional lattices. 
We represent the elements in $\mathsf{PGL}_2(F)$ by $(2 \times 2)$-matrices (with respect to the standard basis of $F^2$). 

We will mainly work inside the arithmetic subgroup $\Pi := \mathsf{PGL}_2(\mathcal{O}_{\{p, \infty\}})$ of $\mathsf{PGL}_2(F)$. This group contains the arithmetic groups $\Gamma := \mathsf{PGL}_2(\mathcal{O}_{\{p\}})$ and $\Xi := \mathsf{PGL}_2(\mathcal{O}_{\{\infty\}})$ as subgroups. 

Recall that a dense subgroup of a topological group acting on a discrete set has the same orbits. Hence, by strong approximation (see \cite{Prasad:1977}), the subgroup $\mathsf{PSL}_2(\mathcal{O}_{\{p, \infty \}}) < \Pi$ acts edge-transitively, if we restrict the action to either $X$ or $Y$. In particular this group has two orbits (corresponding to the types) on the vertices in either restriction.

A similar fact is true for $\Pi= \mathsf{PGL}_2(\mathcal{O}_{\{p, \infty\}})$:
If $d$ is odd, then $\Pi$ acts non-type preservingly on both $X$ and $Y$, and hence transitively on the vertices of either tree. If $d$ is even, then $\Pi$ acts transitively on the vertices of $X$, but type-preservingly, whence with two orbits on the vertices of $Y$.

The following are further groups of interest to us:
\begin{itemize}
\item
$\Pi_{x_0} = \Xi$.
\item
$\Pi _{y_0} =: \widetilde{\Gamma}$, where $\widetilde{\Gamma}$, in case $d$ is even, acts non-type preservingly on $X$ and contains $\Gamma$ as an index two subgroup and, in case $d$ is odd, equals $\Gamma$.
\item
$\Xi_{y_0} = \PGL_2(k) =: H_0$.
\item
$\Xi_{y_i} = H_i$ ($i \in \N$) with 
$$
H_i :=\left\{\begin{array}{c|c}  \left(\begin{array}{cc} \alpha & b \\ 0 & \delta \end{array}\right)  & \alpha, \delta \in k^*, b \in k[t], \deg (b) \leq  i     \end{array}\right\}.
$$
\end{itemize}




A fundamental domain for the quotient $\Xi \backslash Y$  is given by the vertices $y_i$ with $i \in \N_0$ and the edges between these, forming a ray (cf.\ e.g.~\cite[Chapter~II, Section~1.6, Corollary]{Ser:80} or the case $d=1$ in the introduction).

%
%
%

\paragraph{Maps sending $x_0$ to a neighbor and $y_m$ to $y_n$.}\label{section:maps}
Let $h \in \Pi$ such that $h(x_0)$ is adjacent to $x_0$, represented by a matrix 
$$M :=  \left(\begin{array}{cc} \alpha & \beta \\ \gamma & \delta   \end{array}\right) \in {\cO_{\{ p, \infty\}}}^{2 \times 2}.$$
By taking the scalar multiple with the appropriate power of $f$ we may assume that $\alpha$, $\beta$, $\gamma$ and $\delta$ all lie in $k[t]$ and are coprime as polynomials.

When this is the case the distance $\dd(x_0, h(x_0))$  equals $\nu_p(\det(M))$ by Lemma~\ref{distance}. Hence $f$ is a divisor of $\det(M) \in k[t]$, but $f^2$ is not. As $\det(M) \in {\cO_{\{ p, \infty\}}}^\times$ this implies that
\begin{eqnarray}
\det(M) = \lambda f \text{ with } \lambda \in k^*. \label{determinantM}
\end{eqnarray}

Next we want to determine the set of elements in $\mathsf{PGL}_2(F)$ which map $y_m$ to $y_n$ with $m,n \in \mathbb{N}_0$. Since the stabilizer of $y_0$ in $\mathsf{PGL}_2(F)$ is $\mathsf{PGL}_2(\cO_\infty)$, this set can be described as $g\mathsf{PGL}_2(\cO_\infty)g'$ with arbitrary $g, g' \in \mathsf{PGL}_2(F)$ that satisfy $g'(y_m)=y_0$ and $g(y_0)=y_n$; for instance, $g$ can be represented by the matrix $\begin{pmatrix} t^n & 0 \\ 0 & 1 \end{pmatrix}$ and $g'$ by the matrix $\begin{pmatrix} t^{-m} & 0 \\ 0 & 1 \end{pmatrix}$.  

Let
\begin{equation}
 \begin{pmatrix} \alpha & \beta \\ \gamma & \delta  \end{pmatrix} := \begin{pmatrix} at^{n-m} & bt^n \\ ct^{-m} & d \end{pmatrix} = \begin{pmatrix} t^n & 0 \\ 0 & 1 \end{pmatrix} \begin{pmatrix} a & b \\ c & d \end{pmatrix}\begin{pmatrix} t^{-m} & 0 \\ 0 & 1 \end{pmatrix} \in g\mathsf{PGL}_2(\cO_\infty)g'. \label{matrixwrong}
\end{equation}

We conclude from the above discussion that the elements $\begin{pmatrix} \alpha & \beta \\ \gamma & \delta  \end{pmatrix} \in \mathsf{PGL}_2(F)$  which map $y_m$ to $y_n$ are exactly those that satisfy $\alpha, \beta, \gamma,\delta \in F$,  $\nu_\infty (\alpha)\geq  m-n$, $\nu_\infty(\beta) \geq -n$, $\nu_\infty(\gamma) \geq m$, $\nu_\infty(\delta) \geq 0$ and $\nu_\infty(\alpha \delta - \beta\gamma) = m-n$. 

In the proof of our main result we will make use of the set $\Upsilon_{n,m}$ of elements of $\Pi$ that map $x_0$ to a neighbor and $y_m$ to $y_n$ with $m,n \in \mathbb{N}_0$. We want to describe this set using, as above, matrices $M :=  \left(\begin{array}{cc} \alpha & \beta \\ \gamma & \delta   \end{array}\right) \in {\cO_{\{ p, \infty\}}}^{2 \times 2}$ with entries in $k[t]$ that are coprime as polynomials and whose determinant equals a non-zero scalar multiple of $f$. Since $\nu_\infty(f) = -d$, this means we have to multiply the matrix from equation (\ref{matrixwrong}) with the scalar matrix $\begin{pmatrix} t^{\frac{d-n+m}{2}} & 0 \\ 0 & t^{\frac{d-n+m}{2}} \end{pmatrix}$. Using $v_\infty(\frac{a}{b}) = \deg(b)-\deg(a)$ for $a, b \in k[t]$, we arrive at the following description of the desired set: 

\begin{prop}
The set of elements of $\Pi$ that map $x_0$ to a neighbor and $y_m$ to $y_n$ for $m,n \in \mathbb{N}_0$ equals
\begin{equation} \label{equation:xi_nm}
\Upsilon_{n,m} :=\left\{\begin{array}{c|c}  \left(\begin{array}{cc} \alpha & \beta \\ \gamma & \delta  \end{array}\right)  & 
\begin{array}{c} \alpha, \beta, \gamma, \delta \in k[t]; \deg (\alpha) \leq  \frac{d+n-m}{2}, \deg (\beta) \leq  \frac{d+n+m}{2}, \\ \deg (\gamma) \leq  \frac{d-n-m}{2}, \deg (\delta) \leq  \frac{d-n+m}{2}; \alpha\delta - \beta\gamma = \lambda f, \lambda \in k^*  \end{array}
   \end{array}\right\} .
\end{equation}
\end{prop}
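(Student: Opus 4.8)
The plan is to establish the Proposition by bringing together the two local analyses that precede it: the description of matrices representing elements $h \in \Pi$ that send $x_0$ to a neighbor (culminating in equation~(\ref{determinantM})), and the description of matrices that send $y_m$ to $y_n$ (the list of valuation inequalities following equation~(\ref{matrixwrong})). The two conditions are imposed at different places, $p$ and $\infty$, so the strategy is to intersect them and then normalize.

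First I would recall that, by the discussion around equation~(\ref{determinantM}), any $h \in \Pi$ with $h(x_0)$ adjacent to $x_0$ can be represented by a matrix $M = \left(\begin{smallmatrix}\alpha & \beta\\ \gamma & \delta\end{smallmatrix}\right)$ with entries in $k[t]$, coprime as polynomials, and with $\det(M) = \lambda f$ for some $\lambda \in k^*$; conversely any such $M$ gives such an $h$ (since $\nu_p(\det M) = 1$ forces $d(x_0, h(x_0)) = 1$ by Lemma~\ref{distance}). Next I would impose the condition $h(y_m) = y_n$. By the analysis after equation~(\ref{matrixwrong}), the elements of $\mathsf{PGL}_2(F)$ sending $y_m$ to $y_n$ are exactly those represented, up to a scalar from $F^\times$, by a matrix with $\nu_\infty(\alpha) \geq m - n$, $\nu_\infty(\beta) \geq -n$, $\nu_\infty(\gamma) \geq m$, $\nu_\infty(\delta) \geq 0$, and $\nu_\infty(\det) = m - n$.

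The one subtlety is matching the normalizations: the matrix from~(\ref{matrixwrong}) has $\infty$-determinant valuation $m-n$, whereas our $M$ is required to have $\det(M) = \lambda f$, hence $\nu_\infty(\det M) = \nu_\infty(f) = -d$. So I would multiply the matrix of~(\ref{matrixwrong}) by the scalar $t^{(d - n + m)/2}$, shifting each $\nu_\infty$ bound down by $(d-n+m)/2$ and the determinant valuation down by $d - n + m$ to the required value $-d$. Translating $\nu_\infty(\frac{a}{b}) = \deg b - \deg a$ into degree bounds, the shifted inequalities $\nu_\infty(\alpha) \geq m - n - \tfrac{d-n+m}{2}$ etc.\ become exactly $\deg(\alpha) \leq \tfrac{d+n-m}{2}$, $\deg(\beta) \leq \tfrac{d+n+m}{2}$, $\deg(\gamma) \leq \tfrac{d-n-m}{2}$, $\deg(\delta) \leq \tfrac{d-n+m}{2}$, and $\deg$ bounds automatically force the entries into $k[t]$; together with $\alpha\delta - \beta\gamma = \lambda f$ for some $\lambda \in k^*$, this is precisely the displayed description of $\Upsilon_{n,m}$. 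I would also check that the coprimality-of-entries condition is now automatic: if $f$ divided all four entries it would divide $\det M / f^2$, contradicting $\det M = \lambda f$; and any scalar common factor has been absorbed in passing to $\mathsf{PGL}_2$.

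The only real obstacle is bookkeeping: one must confirm that the scalar normalization is consistent (in particular that $d - n + m$ is even whenever $\Upsilon_{n,m}$ is meant to be nonempty — which follows since $\det M = \lambda f$ has even $\nu_\infty$-valuation parity constraints tied to the type of $x_0$, or one simply treats odd $d - n + m$ as giving the empty set) and that no valuation bound is lost or double-counted when translating between $\nu_\infty$ and polynomial degree. Once the normalization is pinned down, the equality of the two sets is immediate from the two preceding characterizations, so I expect the proof to be short: cite~(\ref{determinantM}) and the post-(\ref{matrixwrong}) discussion, perform the scalar shift, and rewrite valuations as degrees. \qed
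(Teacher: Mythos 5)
Your proposal is correct and takes essentially the same route as the paper, which obtains the proposition precisely by intersecting the two preceding analyses --- the condition $\det(M)=\lambda f$ from the neighbor-of-$x_0$ discussion and the $\nu_\infty$-inequalities following (\ref{matrixwrong}) --- and then rescaling by the scalar matrix $t^{(d-n+m)/2}\cdot\mathrm{id}$ and rewriting valuations as degree bounds. (One small imprecision: the degree bounds by themselves do not force the entries into $k[t]$; polynomiality comes from the earlier normalization of the representative to coprime entries in $k[t]$, which your first paragraph already supplies, so the argument stands.)
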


Note that, if $d$ and $m+n$ do not have the same parity, then $\Upsilon_{n,m} = \emptyset$. Furthermore note that $\Upsilon_{n,m}$ is stable under multiplication with $H_m$ from the right and with $H_n$ from the left.

\paragraph{Coprime polynomials.} At a certain point in the proof we will need to calculate the number of coprime polynomials in $k[t]$ with some degree constraints. For this we use the following result:

\begin{lemma}[\cite{Ben-Ben:07}, Theorem~3]
Let $i, j \in \mathbb{N}$, let $\alpha$ be an arbitrary polynomial in $k[t]$ of degree $i$ and let $\beta$ be an arbitrary polynomial in $k[t]$ of degree $j$. Then the probability that $\alpha$ and $\beta$ are coprime is $1 - 1/q$. \qed
\end{lemma}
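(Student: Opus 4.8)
The plan is to reduce the assertion to a counting problem over monic polynomials and then solve the resulting convolution identity with a two-variable generating function. Since coprimality is unchanged under multiplication by units, replacing $\alpha,\beta$ by their monic associates we may assume both are monic; as there are exactly $q^i$ monic polynomials of degree $i$, the probability in question equals $c(i,j)/q^{i+j}$, where $c(i,j)$ denotes the number of pairs $(\alpha,\beta)$ of monic polynomials in $k[t]$ of degrees $i$ and $j$ respectively with $\gcd(\alpha,\beta)=1$. It therefore suffices to show $c(i,j)=q^{i+j}-q^{i+j-1}$ for all $i,j\geq 1$.

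For an arbitrary pair $(\alpha,\beta)$ of monic polynomials of degrees $i,j$, put $g:=\gcd(\alpha,\beta)$ (monic) and write $\alpha=g\alpha'$, $\beta=g\beta'$. Then $(\alpha,\beta)\mapsto(g,\alpha',\beta')$ is a bijection onto the set of triples consisting of a monic polynomial $g$ of some degree $k\leq\min(i,j)$ together with a coprime pair $(\alpha',\beta')$ of monic polynomials of degrees $i-k$, $j-k$. Counting the $q^k$ choices for $g$ yields the convolution identity
\[
q^{i+j}=\sum_{k=0}^{\min(i,j)}q^k\,c(i-k,\,j-k)\qquad(i,j\geq 0),
\]
valid for all $i,j\geq 0$ once one records the boundary values $c(i,0)=q^i=c(0,i)$ (the constant polynomial $1$ is coprime to everything).

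To solve it, let $G(u,v):=\sum_{i,j\geq 0}c(i,j)\,u^iv^j$. The identity says $\frac{1}{(1-qu)(1-qv)}=\frac{1}{1-quv}\,G(u,v)$, hence
\[
G(u,v)=\frac{1-quv}{(1-qu)(1-qv)}=\frac{1}{(1-qu)(1-qv)}-\frac{quv}{(1-qu)(1-qv)}.
\]
Reading off the coefficient of $u^iv^j$ on the right gives $c(i,j)=q^{i+j}-q^{i+j-1}$ as soon as $i,j\geq 1$, and dividing by $q^{i+j}$ shows the probability equals $1-1/q$; alternatively $c(i,j)$ can be extracted from the convolution identity directly by induction on $\min(i,j)$ via a telescoping sum. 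There is no serious obstacle; the only point requiring care is the bookkeeping of the boundary terms $k=0$ and $k=\min(i,j)$ — equivalently the degenerate values $c(i,0)$ — which is exactly why the clean formula $q^{i+j}-q^{i+j-1}$, and hence the probability $1-1/q$, holds only for $i,j\geq 1$ and fails on the axes.
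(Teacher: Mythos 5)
Your argument is correct. Be aware, though, that the paper does not actually prove this lemma: it is imported verbatim as Theorem~3 of the cited Benjamin--Bennett article and closed with an immediate qed, so there is no internal proof to compare against, and what you have written is a complete, self-contained replacement for that citation. After the harmless reduction to monic polynomials, you stratify all pairs by their monic gcd to obtain the convolution $q^{i+j}=\sum_k q^k\,c(i-k,j-k)$, and the generating function $G(u,v)=(1-quv)/\bigl((1-qu)(1-qv)\bigr)$ then gives $c(i,j)=q^{i+j}-q^{i+j-1}$ for $i,j\geq 1$; this is the standard ``zeta function of $k[t]$'' computation, the factor $1/(1-quv)$ being $\sum_g (uv)^{\deg g}$ over monic $g$. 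Your bookkeeping is right at every step (the reduction to monic representatives is $(q-1)$-to-one in each coordinate and preserves coprimality, and the boundary values $c(i,0)=q^i$ make the convolution valid for all $i,j\geq 0$), and your closing observation that the clean formula fails on the axes is exactly the point the paper has to address separately in Corollary~\ref{cor:coprime}, where $\beta$ is allowed to have degree \emph{at most} $j$ including degree $0$. For what it is worth, the published proof in the cited source is a bijective argument based on the division algorithm rather than a generating-function computation, so your route also differs from the external one; it trades the combinatorial bijection for a two-line formal identity, at the cost of having to introduce $G(u,v)$. The only stylistic remark is that inverting $1/(1-quv)$ and the induction on $\min(i,j)$ are two phrasings of the same telescoping, so one of the two suffices.
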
 

Note that the statement of the preceding lemma is also true, if $\beta$ is an arbitrary constant polynomial.  This has the following immediate corollary.
\begin{cor}\label{cor:coprime}
Let $i \in \mathbb{N}$, $j \in \mathbb{N}_0$, let $\alpha$ be an arbitrary polynomial in $k[t]$ of degree $i$ and let $\beta$ be an arbitrary polynomial in $k[t]$ of degree {\em at most} $j$. Then the probability that $\alpha$ and $\beta$ are coprime is $1 - 1/q$.
\qed
\end{cor}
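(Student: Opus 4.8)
The plan is to prove Corollary~\ref{cor:coprime} by reducing it to the preceding lemma (the Benjamin--Bennett result) together with the remark that the lemma remains valid when $\beta$ is a nonzero constant. First I would set up the counting problem precisely. Fix $\alpha$ of degree $i \geq 1$. The set of polynomials $\beta$ of degree at most $j$ has size $q^{j+1}$ (including $\beta = 0$), so the probability in question is $N/q^{j+1}$ where $N$ is the number of such $\beta$ coprime to $\alpha$. Since $\deg \alpha = i \geq 1$, the zero polynomial is never coprime to $\alpha$, so we may equally count over the $q^{j+1}-1$ nonzero polynomials of degree at most $j$, or — more conveniently — partition the nonzero $\beta$'s according to their exact degree.

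Next I would carry out the partition: write $\{\beta : \deg \beta \leq j,\ \beta \neq 0\}$ as the disjoint union over $e \in \{0, 1, \dots, j\}$ of the sets $S_e := \{\beta : \deg \beta = e\}$, where $|S_e| = (q-1)q^{e}$ (choice of leading coefficient times the lower coefficients). By the lemma in the case $e \geq 1$, and by the noted extension of the lemma in the case $e = 0$, exactly a $(1 - 1/q)$-fraction of each $S_e$ is coprime to $\alpha$. Summing, the number of $\beta$ of degree at most $j$ coprime to $\alpha$ is $(1 - 1/q)\sum_{e=0}^{j} |S_e| = (1 - 1/q)(q^{j+1}-1)$. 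Hence the probability, taken over all $q^{j+1}$ polynomials of degree at most $j$, is $(1-1/q)(q^{j+1}-1)/q^{j+1}$, which is \emph{not} quite $1 - 1/q$; so the probability statement must implicitly be taken over the $q^{j+1}-1$ \emph{nonzero} polynomials (equivalently, one restricts to $\beta \neq 0$, which loses nothing since $0$ is never coprime to $\alpha$), giving exactly $1 - 1/q$.

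The one point requiring care — the main (minor) obstacle — is the role of the zero polynomial and the exact sample space over which "probability" is computed: one must observe that $\gcd(\alpha, 0) = \alpha$ has degree $i \geq 1$, so $\beta = 0$ contributes nothing to the coprime count and can be harmlessly excluded, after which the uniform partition over exact degrees $0, 1, \dots, j$ and the (extended) lemma combine cleanly. Apart from this bookkeeping the corollary is immediate: it is just a weighted average of the constant probability $1 - 1/q$ furnished by the lemma across the finitely many degree strata, so the average is again $1 - 1/q$. I would present this in two or three sentences, citing the lemma and its constant-$\beta$ extension for each stratum and invoking the disjoint union to conclude. \qed
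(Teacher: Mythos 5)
Your overall strategy---stratify the polynomials $\beta$ of degree at most $j$ by their exact degree $e\in\{0,\dots,j\}$, apply the Benjamin--Bennett lemma (together with its constant-$\beta$ extension for $e=0$) to each stratum, and average---is exactly the argument the paper leaves implicit behind the word ``immediate,'' and your bookkeeping point about the zero polynomial is well taken: $\gcd(\alpha,0)=\alpha$ is not a unit when $i\geq 1$, so the probability is exactly $1-1/q$ only if the sample space for $\beta$ excludes $0$ (otherwise one gets the factor $1-q^{-(j+1)}$ you computed).

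There is, however, one step that is false as written. You fix $\alpha$ and assert that ``exactly a $(1-1/q)$-fraction of each $S_e$ is coprime to $\alpha$.'' The lemma does not say this: it is a statement about a uniformly random \emph{pair} $(\alpha,\beta)$, and the conditional probability given a fixed $\alpha$ depends on the factorization of $\alpha$, not only on its degree. For instance, over $\mathbb{F}_2$ take $\alpha=t^2+t=t(t+1)$ and $e=1$: neither of the two polynomials $t$, $t+1$ of degree one is coprime to $\alpha$, so the fraction for that stratum is $0$, not $1/2$; for $e=0$ it is $1$. (More generally, for $\beta$ ranging over all polynomials of degree less than some $D\geq\deg\alpha$, the fraction coprime to a fixed $\alpha$ is $\prod_{P\mid\alpha}(1-q^{-\deg P})$ over the distinct irreducible divisors $P$ of $\alpha$, which in general differs from $1-1/q$.) The repair is immediate and costs nothing: do not fix $\alpha$. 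Keep $\alpha$ uniformly distributed on the polynomials of exact degree $i$ and partition the product sample space into the strata $\{\deg\alpha=i\}\times S_e$; on each such stratum the lemma (respectively its constant-$\beta$ extension when $e=0$) gives probability exactly $1-1/q$, and the law of total probability yields the corollary. With that single change your argument is correct and coincides with the paper's intended one.
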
 

\section{The orbit space $\widetilde\Gamma \backslash X$} \label{quotient}

\paragraph{Vertices of $\widetilde\Gamma \backslash X$.}
By the transitivity properties discussed in Section~\ref{basics} we may identify the coset space $\Pi / \Xi$ with the set of vertices of $X$.
Moreover, if we consider the natural left action of $\Pi$ on $Y$ as a right action via inversion, then we may identify the coset space $\widetilde\Gamma\backslash\Pi$ with the set of vertices of $Y$, in case $d$ is odd, and the set $Y_{\text{even}}$ of vertices of $Y$ with the same type as $y_0$, in case $d$ is even. 

Hence
\begin{eqnarray*}
\widetilde\Gamma \backslash X & \cong & \widetilde\Gamma \backslash (\Pi / \Xi) \\
& = & (\widetilde\Gamma \backslash \Pi) / \Xi \\
& \cong & \left\{ \begin{array}{rc}  Y_{\text{even}} / \Xi, & \text{if $d$ is even}, \\ Y/\Xi, & \text{if $d$ is odd}. \end{array} \right.
\end{eqnarray*}

The sets $\{ y_0, y_2, y_4, ... \}$ resp.\ $\{y_0,y_1,y_2, ... \}$ from Section~\ref{basics} form a system of representatives for the $\Xi$-orbits on $Y_{\text{even}}$ resp.\ $Y$. Hence the above correspondence provides a labeling of each  $\widetilde\Gamma$-orbit on $X$ as $X_i$ if and only if it corresponds to the $\Xi$-orbit containing $y_i$. If $d$ is even, of course, only even indices $i$ occur.

%
%
%



\paragraph{Edges of $\widetilde\Gamma \backslash X$.}
Next we describe  the number of edges in the quotient between the orbits $X_n$ and $X_m$ (where $n,m \in \N$, $m \geq n$ and both even if $d$ is even). 

Let $x$ be a vertex in the orbit $X_n$. It corresponds to a double coset $\widetilde \Gamma g \Xi$ for some $g \in \Pi$ and, by definition, there exists $g \in \Pi$ such that  $g(x_0)=x$ and $g^{-1}(y_0)=y_n \Longleftrightarrow g(y_n)=y_0$. Similarly, for each vertex $x'$ in the orbit $X_m$ there exists a $g' \in \Pi$ with $g'(x_0)=x'$ and $g'(y_m)=y_0$

Assume $x' \in X_m$ is adjacent to $x$. Then $z := g^{-1} x'$ is a neighbor of $x_0$ and, moreover, $g^{-1}g'(x_0,y_m)=(z,y_n)$, whence $h:= g^{-1}g'\in \Pi$ is an element of $\Upsilon_{n,m}$. 
Two elements $h_1=g^{-1}g'_1$ and $h_2=g^{-1}g'_2$ of $\Upsilon_{n,m}$ determine the same neighbor of $x$ if and only if they are in the same left coset in $\Upsilon_{n,m} /H_m$, as $H_m$ is the stabilizer of the pair $(x_0,y_m)$.


Next we have to account for the orbits of the stabilizer of $\widetilde\Gamma_x = \Pi_{(x,y_0)}$ on the neighbors of $x$ in the orbit $X_m$. In fact, we will study the orbits on the neighbors of $x_0$ in $g^{-1} \widetilde\Gamma_x g = g^{-1}\Pi_{(x,y_0)}g = \Pi_{(x_0,y_n)} = H_n$ instead: Because of the natural left action of $H_n$ on $X$ two neighbors $z$ and $z'$ of $x_0$ in $X_m$ are in the same $H_n$-orbit if and only if their corresponding left cosets $h H_m$ and $h' H_m$ are contained in the same double coset in $H_n \backslash \Upsilon_{n,m} /H_m$.

We conclude the following:

\begin{prop}\label{prop:4.1}
The number of edges between the orbits $X_n$ and $X_m$ in the quotient $\widetilde\Gamma  \backslash X$ equals $|H_n \backslash \Upsilon_{n,m} /H_m|$. 
\end{prop}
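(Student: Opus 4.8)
The plan is to show that the edges between $X_n$ and $X_m$ in $\widetilde\Gamma\backslash X$ are in bijection with the double cosets in $H_n\backslash\Upsilon_{n,m}/H_m$ by carefully unwinding the identifications set up in the preceding paragraphs. First I would fix, once and for all, a vertex $x$ in the orbit $X_n$ together with an element $g\in\Pi$ with $g(x_0)=x$ and $g(y_n)=y_0$; such a $g$ exists by the labeling of orbits via the correspondence $\widetilde\Gamma\backslash X\cong (\widetilde\Gamma\backslash\Pi)/\Xi$. The edges of the quotient $\widetilde\Gamma\backslash X$ between $X_n$ and $X_m$ are, by definition, the $\widetilde\Gamma$-orbits of pairs $\{x',x''\}$ of adjacent vertices with $x'\in X_n$, $x''\in X_m$; since $\widetilde\Gamma$ acts transitively on $X_n$, each such orbit has a representative of the form $\{x,x''\}$ with $x''\in X_m$ adjacent to $x$, and two such representatives $\{x,x''_1\}$, $\{x,x''_2\}$ lie in the same $\widetilde\Gamma$-orbit if and only if $x''_1,x''_2$ lie in the same orbit under the stabilizer $\widetilde\Gamma_x$. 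Hence the set of edges is identified with $\widetilde\Gamma_x\backslash\{\text{neighbors of }x\text{ in }X_m\}$.

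Next I would translate this set-up through $g^{-1}$. Applying $g^{-1}$ carries the neighbors of $x$ to the neighbors of $x_0$, carries the orbit $X_m$ (which is $\widetilde\Gamma$-stable, hence the relevant set of neighbors is permuted) to itself in the appropriate sense, and conjugates $\widetilde\Gamma_x=\Pi_{(x,y_0)}$ to $g^{-1}\widetilde\Gamma_x g=\Pi_{(x_0,y_n)}=H_n$, using $g(y_n)=y_0$ and $g(x_0)=x$ together with the fact that $\Pi_{y_0}=\widetilde\Gamma$. So the edges are in bijection with $H_n\backslash N$, where $N$ is the set of neighbors $z$ of $x_0$ with $z\in g^{-1}X_m$ (equivalently, such that $z$ lies in a $\widetilde\Gamma$-translate reaching $X_m$). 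For each such $z$ I would choose $g'\in\Pi$ with $g'(x_0)=gz$ and $g'(y_m)=y_0$, so that $h:=g^{-1}g'\in\Pi$ satisfies $h(x_0)=z$ (a neighbor of $x_0$) and $h(y_m)=g^{-1}(y_0)=y_n$, i.e.\ $h\in\Upsilon_{n,m}$; conversely every $h\in\Upsilon_{n,m}$ produces such a neighbor $z=h(x_0)$, which does lie in $g^{-1}X_m$ since $gz=gh(x_0)$ and $gh$ maps $y_m$ to $y_0$.

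Then I would identify the fibres of the map $\Upsilon_{n,m}\to N$, $h\mapsto h(x_0)$: two elements $h_1,h_2\in\Upsilon_{n,m}$ give the same neighbor $z$ of $x_0$ if and only if $h_2^{-1}h_1$ fixes $x_0$ and fixes $y_m$ (the latter because both $h_i$ map $y_m$ to $y_n$), i.e.\ $h_2^{-1}h_1\in\Pi_{(x_0,y_m)}=H_m$, so $N\cong\Upsilon_{n,m}/H_m$ as a set. Finally, pushing the left $H_n$-action through this bijection — which is legitimate because $\Upsilon_{n,m}$ is stable under left multiplication by $H_n$ and the bijection is $H_n$-equivariant ($H_n$ fixes $x_0$, so $h\mapsto h(x_0)$ intertwines left translation with the action on neighbors) — gives edges $\;\cong\; H_n\backslash(\Upsilon_{n,m}/H_m)=H_n\backslash\Upsilon_{n,m}/H_m$, which is exactly the claimed count $|H_n\backslash\Upsilon_{n,m}/H_m|$. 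Most of this is bookkeeping that has already been spelled out in the two paragraphs preceding the proposition; the one point deserving genuine care — and the main obstacle — is verifying that the set $N$ of relevant neighbors of $x_0$ is \emph{exactly} the image of $\Upsilon_{n,m}$ and not a proper subset, i.e.\ that for every neighbor $z$ of $x_0$ lying in $g^{-1}X_m$ one can actually pick $g'\in\Pi$ (rather than merely in $\mathsf{PGL}_2(F)$) with $g'(x_0)=gz$ and $g'(y_m)=y_0$; this is where one uses that $gz\in X_m$ means precisely that its $\widetilde\Gamma$-orbit corresponds to the $\Xi$-orbit of $y_m$, so such a $g'$ exists in $\Pi$ by the very definition of the labeling, closing the argument.
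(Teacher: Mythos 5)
Your argument is correct and follows essentially the same route as the paper: fix $x\in X_n$ with $g(x_0)=x$ and $g(y_n)=y_0$, identify the neighbors of $x$ in $X_m$ with $\Upsilon_{n,m}/H_m$ via $h=g^{-1}g'$, and then quotient by the conjugated stabilizer $g^{-1}\widetilde\Gamma_x g=\Pi_{(x_0,y_n)}=H_n$ to obtain the double coset count. The only difference is that you spell out the surjectivity of $h\mapsto h(x_0)$ onto the relevant set of neighbors, a point the paper leaves implicit.
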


An alternative approach to Proposition~\ref{prop:4.1} can be found in \cite[Exercise~2, p.~116]{Ser:80}.
From this point on the main difficulty lies in calculating $|H_n \backslash \Upsilon_{n,m} /H_m|$.

We will distinguish between three cases in order to determine this number. Note that we assume that $m \geq n$, and we can additionally assume that $d$ and $m+n$ have the same parity as otherwise $ \Upsilon_{n,m}$ is empty (see Section~\ref{section:maps}).

\paragraph{Case 1: $m+n>d$.}
This assumption implies,  using the description from Equation~(\ref{equation:xi_nm}) on page \pageref{equation:xi_nm}, that $\gamma$ is zero, whence $\alpha\delta = \det(M) = \lambda f$ for some $\lambda \in k^*$ by Equation~(\ref{determinantM}) on page \pageref{determinantM}. As $f$ is irreducible, this is only possible if one of $\alpha$ or $\delta$ is of degree $d$ and equals $f$ times a constant while the other is a constant. Since $m \geq n$, the description from Equation~(\ref{equation:xi_nm}) implies that, in fact, $\alpha$ has degree $0$ and $\delta$ has degree $d$ and equals $f$ times a constant. We conclude  that $m-n =d$ or otherwise $\Upsilon_{n,m} = \emptyset$. 

In particular, if $\Upsilon_{n,m} \neq \emptyset$, then $\deg(\beta) \leq \frac{d+n+m}{2}=d+n$. 

Altogether $$\begin{pmatrix} \alpha & \beta \\ \gamma & \delta \end{pmatrix} = \begin{pmatrix} \mu_1 & \beta \\ 0 & \mu_2 f \end{pmatrix} = \begin{pmatrix} \mu_1 & qf+r \\ 0 & \mu_2 f \end{pmatrix}$$ with $\mu_1, \mu_2 \in k^*$ and $\beta = qf+r$ via Euclidian division with $\deg(r) < d \leq m$ and $\deg(q) = \deg(\beta)-\deg(f) \leq n$. We conclude
$$
\begin{pmatrix} \alpha & \beta \\ \gamma & \delta \end{pmatrix} = \begin{pmatrix}\mu_1 & qf+r \\0 & \mu_2 f \end{pmatrix} = \begin{pmatrix} 1 & q \\0 & \mu_2\end{pmatrix} 
\begin{pmatrix} 1 & 0 \\0 & f\end{pmatrix}
\begin{pmatrix}\mu_1 & r \\0 & 1\end{pmatrix} \in H_n \begin{pmatrix} 1 & 0 \\ 0 & f \end{pmatrix} H_m.
$$
Since --- as long as $\Upsilon_{n,m} \neq \emptyset \Longleftrightarrow m-n=d$ --- indeed $\begin{pmatrix} 1 & 0 \\\ 0 & f \end{pmatrix} \in \Upsilon_{n,m}$, the double coset space $H_n \backslash \Upsilon_{n,m} /H_m$ consists of a single double coset.

\paragraph{Case 2: $m+n = d$.}
Note that $\begin{pmatrix} 0 & -f \\ 1 & 0 \end{pmatrix} \in \Upsilon_{n,m}$, i.e., this set is non-empty.

Again using  the description from Equation~(\ref{equation:xi_nm}), it follows that $\gamma$ is in $k$. 

If $\gamma = 0$, as in Case 1 we have $$\begin{pmatrix} \alpha & \beta \\ \gamma & \delta \end{pmatrix} = \begin{pmatrix} \mu_1 & \beta \\ 0 & \mu_2 f \end{pmatrix},$$ which implies $n=0$ via the condition $d = \deg(\lambda f) \leq \frac{d-n+m}{2}$ from Equation~(\ref{equation:xi_nm}). Hence $\deg(\beta) \leq d = m$.
We compute
$$
\begin{pmatrix} \alpha & \beta \\ \gamma & \delta \end{pmatrix} = \begin{pmatrix}\mu_1 & \beta \\0 & \mu_2 f \end{pmatrix} = \begin{pmatrix} 0 & 1 \\-1 & 0\end{pmatrix} 
\begin{pmatrix} 0 & -f \\1 & 0\end{pmatrix}
\begin{pmatrix}\mu_1 & \beta \\0 & \mu_2\end{pmatrix} \in H_n \begin{pmatrix} 0 & -f \\ 1 & 0 \end{pmatrix} H_m.
$$

If $\gamma \in k^*$, the fact $\det(M) = \lambda f$ for some $\lambda \in k^*$ (cf.\ Equation~(\ref{determinantM})) allows us to normalize to $\gamma = 1$, so that  $$\begin{pmatrix} \alpha & \beta \\ \gamma & \delta \end{pmatrix} = \begin{pmatrix} \alpha & \alpha\delta - \lambda f \\ 1 & \delta \end{pmatrix}$$ 
with $\deg(\alpha) \leq n$, $\deg(\delta) \leq m$ and $\deg(\beta) = \deg(\alpha\delta - \lambda f) \leq d$.
We compute 

$$
\begin{pmatrix} \alpha & \beta \\ \gamma & \delta \end{pmatrix} = \begin{pmatrix}\alpha & \alpha\delta-\lambda f \\1 & \delta \end{pmatrix} = \begin{pmatrix} \lambda & \alpha \\ 0 & 1\end{pmatrix} 
\begin{pmatrix} 0 & -f \\1 & 0\end{pmatrix}
\begin{pmatrix}1 & \delta \\0 & 1 \end{pmatrix} \in H_n \begin{pmatrix} 0 & -f \\ 1 & 0 \end{pmatrix} H_m. 
$$

We conclude that $H_n \backslash \Upsilon_{n,m} /H_m$ consists of a single double coset.

\paragraph{Case 3: $m+n < d$.}
Define $l := (d-m-n)/2$.

We start by calculating the size of the set $ \Upsilon_{n,m}$ using the description of Equation~(\ref{equation:xi_nm}) on page~\pageref{equation:xi_nm}. The polynomial $\beta$ is contained in an $(l+n+m+1)$-dimensional subspace $V$ of $k[t]$, the polynomial $\delta$ in an $(l+m+1)$-dimensional subspace $W$. The canonical projection $k[t] \to k[t]/(f)$ maps the subspaces $V$, $W$ isomorphically on subspaces $\overline{V}$, $\overline{W}$ of $k[t]/(f)$, because $\deg(\beta) \leq l+n+m = \frac{d+m+n}{2} < d = \deg(f)$ and $\deg(\delta) \leq l+m = \frac{d+m-n}{2} < d = \deg(f)$.

Multiplications with $\overline{\alpha} := \alpha + (f)$ and with $\overline{\gamma} := \gamma + (f)$ in $k[t]/(f)$ induce bijective $k$-linear maps $k[t]/(f) \to k[t]/(f)$, as $\alpha$ and $f$ resp.\ $\gamma$ and $f$ are coprime (for degree reasons, as $f$ is irreducible).

The intersection $\overline{\alpha} \overline{W} \cap \overline{V}\overline{\gamma}$ has dimension at least $\dim_k(\overline{W}) + \dim_k(\overline{V}) - \dim_k(k[t]/(f)) = 2l+2m + n +2 -d=m+2$. It describes the choices of polynomials $\alpha, \beta, \gamma, \delta \in k[t]$ subject to the degree restraints in Equation~(\ref{equation:xi_nm}) that satisfy $\alpha\delta - \beta\gamma \equiv 0 \mod f$.    
Among those, precisely the choices with  $\alpha\delta - \beta\gamma  = \lambda f$ with $\lambda \in k^*$ lead to elements of $ \Upsilon_{n,m}$. We observe that $\alpha\delta-\beta\gamma$ has degree at most $d$, i.e., it suffices to exclude the polynomials $\alpha$, $\beta$, $\gamma$, $\delta$ satisfying $\alpha\delta - \beta\gamma = 0$. 

In other words, there exists a $k$-linear map
\begin{eqnarray}
\Psi : \overline{\alpha} \overline{W} \cap \overline{V}\overline{\gamma} \to k : \overline{\alpha}\overline{\delta} = \overline{\beta}\overline{\gamma} \mapsto \frac{\alpha\delta - \beta\gamma}{f}, \label{Psi}
\end{eqnarray}
 which is well-defined as $\deg(\alpha), \deg(\beta), \deg(\gamma),\deg(\delta) < d$. We are looking for choices of polynomials outside $\ker(\Psi)$.

If $\alpha\delta - \beta\gamma  = \lambda f$ for some $\lambda \in k^*$, then $\alpha$ and $\gamma$ are coprime (again for degree reasons, as $f$ is irreducible). Moreover, $\deg(\alpha) = l+n = \frac{d-m+n}{2}$ or $\deg(\gamma)=l=\frac{d-m-n}{2}$, because $\deg(\alpha\delta-\beta\gamma) = d$. By Corollary~\ref{cor:coprime} there exist $(q-1)^2(q^{2l+n}+q^{2l+n-1})$ choices of pairs $(\alpha, \gamma)$ satisfying these two conditions. 

Let $(\alpha,\gamma)$ be such a pair. If nevertheless $\alpha\delta-\beta\gamma = 0 \Longleftrightarrow \alpha\delta = \beta\gamma$, then $\alpha|\beta$ and $\gamma|\delta$, as $\alpha$ and $\gamma$ are coprime. Reduction of the equality $\alpha\delta = \beta\gamma$ by $\alpha\gamma$ yields $\frac{\delta}{\gamma} = \frac{\beta}{\alpha} =: \epsilon \in k[t]$, i.e., $\beta = \alpha \epsilon$ and $\delta= \gamma \epsilon$. Since $\deg(\alpha) = l+n = \frac{d-m+n}{2}$ or $\deg(\gamma)=l=\frac{d-m-n}{2}$, we have $\deg(\epsilon) = \deg(\beta)-\deg(\alpha) = \deg(\delta)-\deg(\gamma) \leq m$. Conversely, any $\epsilon \in k[t]$ with $\deg(\epsilon)\leq m$ provides suitable $\beta := \alpha \epsilon$ and $\delta := \gamma\epsilon$ satisfying $\alpha\delta-\beta\gamma = 0$. 

The collection of all these choices of $\epsilon$ provides an $(m+1)$-dimensional subspace $U$ consisting of $\overline{\alpha}\overline{\delta} = \overline{\alpha}\overline{\gamma}\overline{\epsilon} = \overline{\beta}\overline{\gamma} \in \overline{\alpha} \overline{W} \cap \overline{V}\overline{\gamma}$. Using the linear map $\Psi$ introduced in (\ref{Psi}), we have $U = \ker(\Psi)$. Since $1 = (m+2)-(m+1) \leq \dim(\overline{\alpha} \overline{W} \cap \overline{V}\overline{\gamma}/U) = \dim(\overline{\alpha} \overline{W} \cap \overline{V}\overline{\gamma}/\ker(\Psi)) \leq \dim(k) =1$, we have $\dim(\overline{\alpha} \overline{W} \cap \overline{V}\overline{\gamma}) = m+2$.

Therefore for each of the $(q-1)^2(q^{2l+n}+q^{2l+n-1})$  viable choices of pairs $(\alpha,\gamma)$ we obtain $q^{m+2} - q^{m+1}$ viable choices of pairs $(\beta,\delta)$.
We conclude that 
\begin{align*}
\vert  \Upsilon_{n,m} \vert &= (q-1) (q^{2l+n} + q^{2l+n-1}) (q^{m+2} - q^{m+1}) \\
&= q^{n+m} (q^{2l+1} + q^{2l}) (q-1)^2 .
\end{align*}
Note that we divided by $q-1$ to take into account the fact that we work in $\mathsf{PGL}(F)$ and not in $\mathsf{GL}(F)$, so that for fixed $\alpha$, $\beta$, $\gamma$, $\delta$ the matrices $\begin{pmatrix} \lambda \alpha & \lambda \beta \\ \lambda \gamma & \lambda \delta \end{pmatrix}$, $\lambda \in k^*$, all describe the same element of $\Upsilon_{n,m}$.

We will calculate $|H_n \backslash \Upsilon_{n,m} /H_m|$ by distinguishing the following three subcases.

\paragraph{Subcase 3.a: $m,n>0$.}
In order to approach this subcase we take a look at the general form of a double coset in $H_n \backslash \Upsilon_{n,m} /H_m$ by considering the following product.
$$
\left(\begin{array}{cc}\kappa & \zeta \\0 & 1 \end{array}\right) 
\left(\begin{array}{cc}\alpha & \beta \\ \gamma & \delta \end{array}\right)
\left(\begin{array}{cc}1 & \eta \\0 & \lambda \end{array}\right) = 
\left(\begin{array}{cc}\kappa\alpha + \zeta\gamma & * \\ \gamma & \lambda \delta + \eta\gamma \end{array}\right)
$$

Here $\kappa, \lambda \in k^*$ and $\zeta, \eta \in k[t]$ with $\deg \zeta \leq n$ and $\deg \eta \leq m$. As we are working in $\mathsf{PGL}$, we are able to chose some entries equal to 1. 

If the degree of $\gamma$ is at least $1$, the fact that $\alpha$ and $\gamma$ are coprime allows us to compute $\kappa$ (and subsequently $\zeta$) via the Chinese remainder theorem by considering $\kappa\alpha + \zeta\gamma$ modulo $\gamma$. If $\gamma\in k$, then necessarily $\deg(\alpha)=l+n >0$. As $l >0$ and $\deg \zeta \leq n$, one can derive $\kappa$ and subsequently $\zeta$ from the leading coefficient of $\kappa\alpha + \zeta\gamma$. Analogously, one can compute $\lambda$ and $\eta$. (Or one inverts the two, now known, matrices on the left hand side of the equation in order to obtain the third.) 

This implies that the size of a double coset in $H_n \backslash \Xi_{n,m} /H_m$ is $\vert H_n \vert \vert H_m \vert$ and, hence,
\begin{align*}
\vert H_n \backslash \Xi_{n,m} /H_m  \vert &= \frac{\vert \Xi_{n,m} \vert}{\vert H_n \vert \vert H_m \vert} \\
&= \frac{q^{n+m} (q^{2l+1} + q^{2l}) (q-1)^2}{(q-1)q^{n+1} \cdot (q-1)q^{m+1}} \\
&= q^{2l-1} + q^{2l-2}.
\end{align*}

\paragraph{Subcase 3.b: $m>0$ and $n =0$.}
We adopt a similar strategy as in the previous subcase, trying to determine the factors of a product of matrices. Here we have to consider the product
$$
\left(\begin{array}{cc}a &b \\c & d \end{array}\right) 
\left(\begin{array}{cc}\alpha & \beta \\ \gamma & \delta \end{array}\right)
\left(\begin{array}{cc}1 & \eta \\0 & \lambda \end{array}\right) = 
\left(\begin{array}{cc}a \alpha + b \gamma & * \\ c \alpha + d\gamma & * \end{array}\right) ,
$$
with $a,b,c,d \in k$ such that $ad-bc \neq 0$, $ \lambda \in k^*$, and $\eta \in k[t]$ with $\deg \eta \leq m$.

As before we can compute $c$ and $d$ from $c \alpha + d\gamma$ and $a$ and $b$ from $a\alpha + b\gamma$, as $\alpha$ and $\gamma$ are coprime. The values of $\lambda$ and $\eta$ are then obtained again by inverting the two, now known, matrices on the left hand side of the equation in order to obtain the third.
We again conclude that the size of a double coset in $H_n \backslash \Xi_{n,m} /H_m$ is $\vert H_n \vert \vert H_m \vert$ and so
\begin{align*}
\vert H_n \backslash \Xi_{n,m} /H_m  \vert &= \frac{\vert \Xi_{n,m} \vert}{\vert H_n \vert \vert H_m \vert} \\
&= \frac{q^{m} (q^{2l+1} + q^{2l}) (q-1)^2}{(q-1)q(q+1)\cdot(q-1)q^{m+1}} \\
&= q^{2l-2}.
\end{align*}

\paragraph{Subcase 3.c: $n=m=0$.}
This final subcase will be handled differently from the previous ones. Note that we necessarily have that $d$ is even. 

We first count the total number of orbits of edges containing the vertex $x_0$ under its vertex stabilizer $\widetilde\Gamma_{x_0} = \Xi_{y_0}= \mathsf{PGL}_2(k)$. This is equivalent to the study of the $\mathsf{PGL}_2(k)$-orbits of points on the projective line $\mathbb{P}^1(\mathbb{F}_{q^d})$. One counts one orbit of length $q+1$ (corresponding to the embedding $\mathbb{P}^1(\mathbb{F}_{q}) \subset \mathbb{P}^1(\mathbb{F}_{q^d})$), one orbit of length $q^2-q$ (corresponding to $\mathbb{P}^1(\mathbb{F}_{q^2 }) \setminus   \mathbb{P}^1(\mathbb{F}_{q})  \subset \mathbb{P}^1(\mathbb{F}_{q^d})$) and $q+q^3 + \dots + q^{d-3}$ other orbits of length $q(q-1)(q+1)$, if $d \geq 4$.

Indeed, $\mathsf{PGL}_2(\mathbb{F}_{q})$ acts on $\mathbb{P}^1(\mathbb{F}_{q^d})$ via M\"obius transformations $z \mapsto \frac{az+b}{cz+d}$. A fixed point $z = \frac{az+b}{cz+d}$ corresponds to a solution of a quadratic equation, whence an element $z \in \mathbb{F}_{q^d} \backslash \mathbb{F}_{q^2}$ has trivial stabilizer and therefore necessarily lies in an orbit of length $q(q-1)(q+1) = |\mathsf{PGL}_2(\mathbb{F}_{q})|$.

This leads to a total of $2$ orbits, if $d=2$, and $2 + q+q^3 + \dots + q^{d-3}$ orbits, if $d \geq 4$. Each of these orbits corresponds with an edge in the quotient $\widetilde\Gamma \backslash X$ containing $X_0$.

As in Case 2 and Subcase 3.b we already have accounted for a total of $1+1+ q^2 +q^4 + \dots + q^{d-4}$ edges, if $d \geq 4$, and 1 edge, if  $d=2$, from $X_0$ to other vertices, the number of edges from $X_0$ to itself is the difference of both numbers, which is $q (q^{d-3}+1)/(q+1)$.

\paragraph{Conclusion.} The main result now follows: If $d$ is odd, then  $\Gamma$ equals  $\widetilde\Gamma$,  so our claims are immediate from the previously determined quotient $\widetilde\Gamma \backslash X$.
If $d$ is even, then  $\Gamma \backslash X$ is the bipartite double cover of  $\widetilde\Gamma \backslash X$, as $\widetilde\Gamma$ does not preserves types in $X$ whereas $\Gamma$ does, forcing us to introduce the vertices $X'_n$.

%

%
%
%
%

\end{document}